\newtheorem{thm}{Theorem}[section]
\newtheorem{lem}[thm]{Lemma}
\newtheorem{prop}[thm]{Proposition}
\newtheorem{ques}[thm]{Question}
\newtheorem{conj}[thm]{Conjecture}
\newtheorem*{SE-thm}{Schoenberg-Edrei Theorem}
\newtheorem*{PS-thm}{P\'olya-Schur Theorem}
\theoremstyle{definition}
\def\diag{\textrm{diag}}
\def\c{\ulcorner}
\numberwithin{equation}{section}
\begin{document}

\begin{frontmatter}

\title{Total positivity of Narayana matrices}

\author[a]{Yi Wang}
\ead{wangyi@dlut.edu.cn}
\author[b]{Arthur L.B. Yang}
\ead{yang@nankai.edu.cn}

\address[a]{School of Mathematical Sciences, Dalian University of Technology, Dalian 116024, P.R. China}
\address[b]{Center for Combinatorics, LPMC, Nankai University, Tianjin 300071, P.R. China}


\begin{abstract}
We prove the total positivity of the Narayana triangles of type $A$ and type $B$, and thus affirmatively confirm a conjecture of Chen, Liang and Wang and a conjecture of Pan and Zeng. We also prove the strict total positivity of the Narayana squares of type $A$ and type $B$.
\end{abstract}

\begin{keyword}
Totally positive matrices, the Narayana triangle of type $A$, the Narayana triangle of type $B$, the Narayana square of type $A$, the Narayana square of type $B$
\\[5pt]
\noindent \emph{AMS Classification 2010:} 05A10 \sep 05A20
\end{keyword}

\end{frontmatter}

\section{Introduction}

Let $M$ be a (finite or infinite) matrix of real numbers.
We say that $M$ is {\it totally positive} (TP)
if all its minors are nonnegative, and we say that it is {\it strictly totally positive} (STP)
if all its minors are positive.
Total positivity is an important and powerful concept
and arises often in analysis, algebra, statistics and probability,
as well as in combinatorics.
See \cite{And87,Bre95,Bre96,CLW-EuJC,CLW-LAA,FJ11,Kar68,Pin10} for instance.

Let $C(n,k)=\binom{n}{k}$.
It is well known~\cite[P. 137]{Kar68} that the Pascal triangle
$$P=\left[C(n,k)\right]_{n,k\geq 0}
=\left[\begin{array}{rrrrrr}
1 &  &  &  &  &  \\
1 & 1 &   &   &   &   \\
1 & 2 & 1 &   &   &   \\
1 & 3 & 3 & 1 &   &   \\
1 & 4 & 6 & 4 & 1 &   \\
\vdots &  & &  &  & \ddots \\
\end{array}\right]$$
is totally positive. Let
$$P^\c=\left[C({n+k},{k})\right]_{n,k\geq 0}=
\left[\begin{array}{ccccc}
1 & 1 & 1 & 1 & \cdots\\
1 & 2 & 3  & 4 & \\
1 & 3 & 6 & 10  & \\
1 & 4 & 10 & 20 & \\
\vdots & & & & \ddots \\
\end{array}\right]$$
be the Pascal square.
Then $P^\c=PP^T$ by the Vandermonde convolution formula
$$\binom{n+k}{k}=\sum_{i}\binom{n}{i}\binom{k}{i}.$$
Note that the transpose and the product of matrices preserve total positivity.
Hence $P^\c$ is also TP.

The main objective of this note is to prove the following two conjectures on the total positivity of the Narayana triangles.
Let $NA(n,k)=\frac{1}{k+1}\binom{n+1}{k}\binom{n}{k}$, which are commonly known as the Narayana numbers.
Let
$$N_A=\left[NA(n,k)\right]_{n,k\geq 0}
=\left[\begin{array}{rrrrrr}
1 &  &  &  &  &  \\
1 & 1 &   &   &   &   \\
1 & 3 & 1 &   &   &   \\
1 & 6 & 6 & 1 &   &   \\
1 & 10 & 20 & 10 & 1 &   \\
\vdots &  & &  &  & \ddots \\
\end{array}\right].$$
The Narayana numbers $NA(n,k)$ have many combinatorial interpretations. An interesting one is that
they appear as the rank numbers of the poset of noncrossing partitions associated to a Coxeter group of type $A$,
see Armstrong \cite[Chapter 4]{Armstrong09}. For this reason, we call $N_A$ the Narayana triangle of type $A$.
Chen, Liang and Wang \cite{CLW-LAA} proposed the following conjecture.
\begin{conj}[{\cite[Conjecture 3.3]{CLW-LAA}}]\label{CLW}
The Narayana triangle $N_A$ is TP.
\end{conj}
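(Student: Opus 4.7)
The plan is to establish the total positivity of $N_A$ by reducing to a related matrix that factors into known TP matrices. The crucial identity
\[
(n+1)\,NA(n,k)=\binom{n+1}{k}\binom{n+1}{k+1},
\]
combined with the observation that left multiplication by the positive diagonal $\diag(1/(n+1))_{n\geq 0}$ preserves total positivity, reduces the problem to showing that the matrix
\[
V=\Bigl[\binom{n+1}{k}\binom{n+1}{k+1}\Bigr]_{n,k\geq 0}
\]
is TP.

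The first step is to seek an algebraic factorization $V=AB$ with both $A$ and $B$ explicitly TP, modeled on the proof that $P^{\c}=PP^T$ is TP via the Vandermonde convolution. Natural building blocks include $P$, $P^T$, shifted/truncated Pascal matrices, and the shifted analogue $[\binom{n+1}{k+1}]$, all of which are TP as submatrices of $P$. Since products of TP matrices are TP, any such factorization would immediately give TP of $V$ and hence of $N_A$. A promising starting identity comes from classifying pairs $(A,B)$ with $A,B\subseteq[n+1]$, $|A|=k$, $|B|=k+1$, by the size $j=|A\cap B|$, which yields
\[
\binom{n+1}{k}\binom{n+1}{k+1}=\sum_{j}\binom{n+1}{j}\binom{n+1-j}{k-j}\binom{n+1-k}{k+1-j};
\]
reinterpreting this as a matrix product involving Pascal-type factors is a plausible route to a TP factorization.

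If no direct algebraic factorization presents itself, I would pivot to a Lindstr\"om--Gessel--Viennot (LGV) interpretation. The Narayana number $NA(n,k)$ is well known to count non-crossing pairs of lattice paths, in line with the interpretation as rank numbers of non-crossing partitions of type $A$. Placing sources and sinks so that each entry $NA(n,k)$ is realized as an LGV determinant for a non-crossing $2$-path system, general minors of $N_A$ become signed counts of $r$-tuples of such $2$-path systems. With a careful arrangement of endpoints, the global non-crossing condition would become automatic, yielding nonnegativity of every minor.

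The main obstacle is to locate the correct algebraic factorization or LGV configuration. Since each entry of $N_A$ already encodes a $2$-path system, an $r\times r$ minor corresponds combinatorially to a $2r$-path configuration, and maintaining the non-crossing property across all $r$ pairs demands delicate endpoint placement; moreover, the natural candidate factorizations $N_A=P\,M$ or $N_A=M\,P^T$ produce auxiliary matrices $M$ whose total positivity is itself not manifest. As a backup, one may attempt a Neville-type decomposition $N_A=L_1L_2L_3\cdots$ into bidiagonal TP matrices, whose entries can in principle be extracted from the recurrences satisfied by the Narayana polynomials or from the minors of $N_A$ computed in small cases.
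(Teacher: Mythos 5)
Your reduction is correct as far as it goes: the identity $(n+1)\,NA(n,k)=\binom{n+1}{k}\binom{n+1}{k+1}$ checks out, and scaling rows by positive constants preserves total positivity, so $N_A$ is TP if and only if $V=[\binom{n+1}{k}\binom{n+1}{k+1}]$ is. But the proposal stops there: none of the three candidate strategies for $V$ is carried out, and each has a real obstruction. The convolution identity you write down, $\binom{n+1}{k}\binom{n+1}{k+1}=\sum_{j}\binom{n+1}{j}\binom{n+1-j}{k-j}\binom{n+1-k}{k+1-j}$, is a correct count but is \emph{not} a matrix product $\sum_j a_{n,j}b_{j,k}$, since the summand depends on $n$, $k$ and $j$ jointly through three factors; so it does not yield a factorization $V=AB$. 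Note also that $V$ is the Hadamard (entrywise) product of two TP matrices, and total positivity is not preserved under Hadamard products in general, so no shortcut of that form is available. The LGV and Neville-decomposition routes are likewise left as open problems in your own write-up ("the main obstacle is to locate the correct \dots configuration"). As it stands this is a research plan, not a proof.

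The missing idea, which is how the paper proceeds, is to normalize by \emph{both} rows and columns rather than rows alone: writing $NA(n,k)=\frac{n!\,(n+1)!}{k!\,(k+1)!\,(n-k)!\,(n-k+1)!}$ shows $NA(n,k)\sim \frac{1}{(n-k)!\,(n-k+1)!}$, i.e.\ up to positive diagonal scalings $N_A$ is the Toeplitz matrix of the single sequence $\bigl(1/(n!\,(n+1)!)\bigr)_{n\ge 0}$. Total positivity then reduces to showing that this sequence is a P\'olya frequency sequence, which is exactly the kind of statement the Schoenberg--Edrei theorem and the Laguerre/multiplier-sequence machinery are built for; the paper quotes the result of Chen, Ren and Yang that $\bigl(1/((t)_n n!)\bigr)_{n\ge 0}$ is PF for all $t>0$ and sets $t=2$. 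If you want to salvage your approach, I would redirect your effort from factorizing $V$ to proving the PF property of $\bigl(1/(n!\,(n+1)!)\bigr)_{n\ge 0}$ directly (its generating function is essentially a Bessel function, whose zeros are real and negative in the relevant variable), which is both shorter and generalizes to the $m$-Narayana triangles.
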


Let $NB(n,k)=\binom{n}{k}^2$, and let
$$N_B=\left[NB(n,k)\right]_{n,k\geq 0}
=\left[\begin{array}{rrrrrr}
1 &  &  &  &  &  \\
1 & 1 &   &   &   &   \\
1 & 4 & 1 &   &   &   \\
1 & 9 & 9 & 1 &   &   \\
1 & 16 & 36 & 16 & 1 &   \\
\vdots &  & &  &  & \ddots \\
\end{array}\right].$$
We call $N_B$ the Narayana triangle of type $B$ since the numbers
$NB(n,k)$ can be interpreted as the rank numbers of the poset of noncrossing partitions associated to a Coxeter group of type $B$,
see also Armstrong \cite[Chapter 4]{Armstrong09} and references therein.
Pan and Zeng \cite{PZ16} proposed the following conjecture.
\begin{conj}[{\cite[Conjecture 4.1]{PZ16}}]\label{PZ}
The Narayana triangle $N_B$ is  TP.
\end{conj}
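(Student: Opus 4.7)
My plan is to prove that $N_B$ is TP by factoring it as a product of known totally positive matrices, then invoking the Cauchy--Binet formula (under which the product of TP matrices is TP).

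Since the Pascal triangle $P$ is already known to be TP, the natural first move is to look for a factorization $N_B = P\, M$ with $M$ lower triangular. The inverse of $P$ has entries $(P^{-1})_{jn} = (-1)^{j-n}\binom{j}{n}$, so
$$M(j,k) \;=\; \sum_n (-1)^{j-n}\binom{j}{n}\binom{n}{k}^2 \;=\; \binom{j}{k}\binom{k}{j-k},$$
which is nonnegative, has $1$'s on the diagonal, and is banded (supported only on $k \leq j \leq 2k$). Equivalently $M(k+i,k) = \binom{k+i}{2i}\binom{2i}{i}$, and the factorization $N_B = PM$ corresponds to the identity $\binom{n}{k}^2 = \sum_j \binom{n}{j}\binom{j}{k}\binom{k}{j-k}$, which has a direct combinatorial proof by classifying ordered pairs $(S,T)$ of $k$-subsets of $[n]$ according to the size $j = |S \cup T|$.

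The heart of the matter is then to prove that the auxiliary matrix $M$ is itself TP. I see two main routes. The first is a further explicit factorization of $M$ into TP pieces --- for example, a bidiagonal factorization obtained via Neville elimination, or a decomposition $M = Q \cdot M'$ with $Q$ a Pascal-type matrix and $M'$ simpler; the central-binomial factor $\binom{2i}{i}$ in the formula for $M(k+i,k)$ suggests peeling off a Toeplitz-like block depending only on the distance $i = j-k$ from the diagonal. The second is a Lindstr\"om--Gessel--Viennot argument: construct a weighted lattice-path digraph in which the weighted count of paths from a source $A_j$ to a sink $B_k$ equals $M(j,k)$, so that each minor of $M$ counts families of non-intersecting paths and is therefore nonnegative.

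The main obstacle is precisely this second step, proving $M$ is TP: the matrix $M$ is not one of the standard TP triangles from the literature (it is neither a classical Riordan array nor a Pascal-like triangle), and its banded shape together with the central binomial coefficient make both the further factorization and the path model require some ingenuity. Once $M$ is shown to be TP, the conclusion that $N_B = P M$ is TP follows immediately from Cauchy--Binet, since every minor of $PM$ is a sum of products of nonnegative minors of $P$ and $M$.
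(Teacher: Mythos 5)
Your proposal is not complete: it reduces the problem to a new, unproven claim rather than resolving it. The factorization itself checks out --- the identity $\binom{n}{k}^2=\sum_j\binom{n}{j}\binom{j}{k}\binom{k}{j-k}$ is correct (your union-size argument works, since once $U=S\cup T$ of size $j$ and $S$ are chosen, $T$ is determined by its intersection with $S$, a $(2k-j)$-subset), so $N_B=PM$ with $M(j,k)=\binom{j}{k}\binom{k}{j-k}=\frac{j!}{(j-k)!^2(2k-j)!}$, and Cauchy--Binet would indeed finish the proof \emph{if} $M$ were known to be TP. But that is precisely where the proof stops: you name two possible strategies (a bidiagonal/Neville factorization, or a Lindstr\"om--Gessel--Viennot path model) and carry out neither. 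Note also that the total positivity of $M$ does not follow from that of $N_B$ even in hindsight, since $P^{-1}$ is not TP; and $M\sim 1/\bigl((j-k)!^2(2k-j)!\bigr)$ is not of Toeplitz type, so none of the standard reductions applies to it directly. As written, the hard part of the conjecture has been relocated, not removed.

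For comparison, the paper's proof avoids introducing any auxiliary matrix. It observes that $NB(n,k)=\frac{n!^2}{k!^2(n-k)!^2}\sim\frac{1}{(n-k)!^2}$, so by Proposition~\ref{prop-eq} the total positivity of $N_B$ is equivalent to that of the Toeplitz matrix of the sequence $(1/(n!)^2)_{n\ge 0}$, i.e.\ to that sequence being a P\'olya frequency sequence. This PF property is then quoted from Chen, Ren and Yang \cite{CRY16}, who proved (via Laguerre's theorem on multiplier sequences) that $(1/((t)_n n!))_{n\ge 0}$ is PF for all $t>0$; taking $t=1$ gives $(1/(n!)^2)$. If you want to salvage your route, you must actually prove $M$ is TP --- for instance by exhibiting an explicit planar network for $M(j,k)$ or a bidiagonal factorization --- and that appears to be at least as hard as the original problem; alternatively, replace your second step by the Toeplitz reduction above.
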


In this note, we will prove that the Narayana triangles $N_A$ and $N_B$ are TP just like the Pascal triangle in a unified approach.
We also prove that the corresponding Narayana squares
$$
N_A^\c=\left[NA(n+k,k)\right]_{n,k\geq 0}
=\left[\begin{array}{rrrrrr}
1 & 1 & 1  & 1 &  &  \cdots \\
1 & 3 & 6  & 10     &   \\
1 & 6 & 20 & 50  &      \\
1 & 10 & 50 & 175  &      \\
\vdots &  & &  &   \ddots \\
\end{array}\right]$$
and
$$N_B^\c=\left[NB({n+k},{k})\right]_{n,k\geq 0}=
\left[\begin{array}{ccccc}
1 & 1 & 1 & 1 & \cdots\\
1 & 4 & 9  & 16 & \\
1 & 9 & 36 & 100  & \\
1 & 16 & 100 & 400 & \\
\vdots & & & & \ddots \\
\end{array}\right]$$
are STP, as well as the Pascal square.

\section{The Narayana triangles}

The main aim of this section is to prove the total positivity of the Narayana triangles $N_A$ and $N_B$.

Before proceeding to the proof, let us first note a simple property of totally positive matrices.
Let $X=[x_{n,k}]$ and $Y=[y_{n,k}]$ be two matrices.
If there exist positive numbers $a_n$ and $b_k$ such that $y_{n,k}=a_nb_kx_{n,k}$ for all $n$ and $k$,
then we denote $x_{n,k}\sim y_{n,k}$ and $X\sim Y$.
The following result is direct by definition.

\begin{prop}\label{prop-eq}
Suppose that $X\sim Y$.
Then the matrix $X$ is TP (resp. STP) if and only if the matrix $Y$ is TP (resp. STP).
\end{prop}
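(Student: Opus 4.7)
The plan is to unpack the definition of a minor and use multilinearity of the determinant, pulling the row and column scaling factors out as a single positive constant that cannot change the sign of the minor.

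Concretely, I would fix an arbitrary minor of $Y$ indexed by rows $n_1<n_2<\cdots<n_r$ and columns $k_1<k_2<\cdots<k_r$, and write
\[
\det\bigl[y_{n_i,k_j}\bigr]_{1\le i,j\le r}=\det\bigl[a_{n_i}b_{k_j}x_{n_i,k_j}\bigr]_{1\le i,j\le r}.
\]
By pulling the scalar $a_{n_i}$ out of the $i$-th row and the scalar $b_{k_j}$ out of the $j$-th column (i.e.\ applying multilinearity of the determinant in rows and columns), this becomes
\[
\det\bigl[y_{n_i,k_j}\bigr]=\Biggl(\prod_{i=1}^{r}a_{n_i}\Biggr)\Biggl(\prod_{j=1}^{r}b_{k_j}\Biggr)\det\bigl[x_{n_i,k_j}\bigr].
\]
Since every $a_{n_i}$ and every $b_{k_j}$ is a positive number by hypothesis, the coefficient on the right is strictly positive.

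Therefore the minor of $Y$ is nonnegative (resp.\ strictly positive) if and only if the corresponding minor of $X$ is nonnegative (resp.\ strictly positive). Because this holds for every choice of rows and columns, $X$ is TP (resp.\ STP) if and only if $Y$ is TP (resp.\ STP). There is no genuine obstacle here; the only thing to be careful about is the bookkeeping that the scaling factors pulled out of each row and each column are all strictly positive, so they never flip the sign of the minor. The statement is, as the authors note, "direct by definition."
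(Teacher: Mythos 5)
Your proof is correct and is exactly the standard argument the paper has in mind when it says the result is ``direct by definition'': factoring the positive row and column scalars out of each minor shows the corresponding minors of $X$ and $Y$ differ by a strictly positive factor, hence have the same sign. Nothing further is needed.
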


Our proof of Conjectures \ref{CLW} and \ref{PZ} is based on the P\'olya frequency property of certain sequences.
Let $(a_n)_{n\ge 0}$ be an infinite sequence of real numbers, and
define its Toeplitz matrix as
$$[a_{n-k}]_{n,k\ge 0}=
\left[\begin{array}{ccccc}
a_0 &  &  &  & \\
a_1 & a_0 &   &   & \\
a_2 & a_1 & a_0 &   & \\
a_3 & a_2 & a_1 & a_0 & \\
\vdots & & & & \ddots \\
\end{array}\right].$$
Recall that $(a_n)_{n\ge 0}$ is said to be a {\it P\'olya frequency} (PF) sequence
if its Toeplitz matrix is TP.
The following is the fundamental representation theorem for PF sequences,
see Karlin~\cite[p. 412]{Kar68} for instance.

\begin{SE-thm}\label{SE-thm}
A nonnegative sequence $(a_0=1,a_1,a_2,\ldots)$ is PF
if and only if its generating function has the form
$$\sum_{n\ge 0}a_nx^n=\frac{\prod_j(1+\alpha_j x)}{\prod_j(1-\beta_j x)}e^{\gamma x},$$
where $\alpha_j,\beta_j,\gamma\ge 0$ and $\sum_j(\alpha_j+\beta_j)<+\infty$.
\end{SE-thm}

Clearly, the sequence $(1/n!)_{n\geq 0}$ is PF by Schoenberg-Edrei Theorem,
which implies that the corresponding Toeplitz matrix $[a_{n-k}]=\left[1/(n-k)!\right]$ is TP.
Also, note that $$\binom{n}{k}=\frac{n!}{k!(n-k)!}\sim \frac{1}{(n-k)!}.$$
Hence the Pascal triangle $P$ is TP by Proposition~\ref{prop-eq}.

%
%
%

We are now in a position to prove Conjectures \ref{CLW} and \ref{PZ}.

\begin{thm}\label{thm-Narayana-triangle}
The Narayana triangles $N_A$ and $N_B$ are TP.
\end{thm}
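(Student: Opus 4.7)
The plan is to extend the Pascal triangle argument given just above. For that proof, the key step was $\binom{n}{k}\sim 1/(n-k)!$, which together with Proposition~\ref{prop-eq} reduces $P$ to the Toeplitz matrix of a PF sequence. I would mimic this for both Narayana triangles.

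First, I would split off the row and column factors. For type $B$,
\[
NB(n,k)=\binom{n}{k}^2=(n!)^2\cdot\frac{1}{(k!)^2}\cdot\frac{1}{((n-k)!)^2},
\]
so $N_B\sim\bigl[\,1/((n-k)!)^2\,\bigr]$. For type $A$, using $(k+1)\cdot k!=(k+1)!$ one gets
\[
NA(n,k)=(n+1)!\,n!\cdot\frac{1}{(k+1)!\,k!}\cdot\frac{1}{(n-k+1)!\,(n-k)!},
\]
so $N_A\sim\bigl[\,1/((n-k+1)!\,(n-k)!)\,\bigr]$. By Proposition~\ref{prop-eq}, it then suffices to show that the two sequences
\[
c^{(B)}_n=\frac{1}{(n!)^2}\qquad\text{and}\qquad c^{(A)}_n=\frac{1}{(n+1)!\,n!}
\]
are PF.

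To apply the Schoenberg--Edrei theorem I would identify the ordinary generating functions as
\[
\sum_{n\ge 0}\frac{x^n}{(n!)^2}=I_0(2\sqrt{x}),\qquad \sum_{n\ge 0}\frac{x^n}{(n+1)!\,n!}=\frac{I_1(2\sqrt{x})}{\sqrt{x}},
\]
where $I_\nu$ is the modified Bessel function of the first kind. Using the classical product formula for $J_\nu$ together with $I_\nu(z)=i^{-\nu}J_\nu(iz)$, both generating functions reduce to
\[
\prod_{k\ge 1}\!\left(1+\frac{4x}{j_{\nu,k}^{2}}\right)\qquad (\nu\in\{0,1\}),
\]
where $j_{\nu,k}$ is the $k$-th positive zero of $J_\nu$. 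This matches the Schoenberg--Edrei form with $\alpha_j=4/j_{\nu,j}^{2}>0$, no $\beta_j$, $\gamma=0$, and $\sum_j\alpha_j<+\infty$ (a classical Rayleigh-type identity). Hence $c^{(A)}_n$ and $c^{(B)}_n$ are PF, their Toeplitz matrices are TP, and Proposition~\ref{prop-eq} finishes the proof.

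The step I expect to be most delicate is verifying the required product expansions of the generating functions. Concretely, one needs that the Bessel-type entire functions $I_0(2\sqrt{x})$ and $I_1(2\sqrt{x})/\sqrt{x}$ have only negative real zeros in $x$ with summable reciprocals, or equivalently that they are of order strictly less than $1$ so Hadamard factorization applies without an exponential factor. These are classical facts about Bessel functions. If one wants to sidestep Bessel machinery entirely, a natural alternative would be to seek an explicit factorization of $N_A$ and $N_B$ as products of simpler known TP matrices (for instance of $LU$- or $LDU$-type tailored to the Narayana setting), thereby deriving the TP property directly from the closure of total positivity under matrix multiplication.
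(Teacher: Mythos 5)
Your proposal is correct. The first half --- extracting row factors $(n+1)!\,n!$ (resp. $(n!)^2$) and column factors $1/((k+1)!\,k!)$ (resp. $1/(k!)^2$) so that Proposition~\ref{prop-eq} reduces the problem to the P\'olya frequency property of $(1/(n!\,(n+1)!))_{n\ge 0}$ and $(1/(n!)^2)_{n\ge 0}$ --- is exactly the paper's reduction. Where you diverge is in establishing that these two sequences are PF: the paper simply cites Chen, Ren and Yang, who showed via Laguerre's theorem on multiplier sequences that $(1/((t)_n\,n!))_{n\ge 0}$ is PF for every $t>0$, and specializes to $t=1,2$. You instead verify the Schoenberg--Edrei form directly, identifying the generating functions with $I_0(2\sqrt{x})$ and $I_1(2\sqrt{x})/\sqrt{x}$ and invoking the Hadamard-type product $\prod_{k\ge 1}(1+4x/j_{\nu,k}^2)$ over the Bessel zeros together with Rayleigh's identity $\sum_k j_{\nu,k}^{-2}=1/(4(\nu+1))$ for the summability of the $\alpha_j$. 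All of these Bessel facts are classical and your computation checks out, so the argument is complete. The trade-off: your route is self-contained modulo standard Bessel function theory and makes the roots of the generating function completely explicit, whereas the paper's citation handles the whole one-parameter family $(1/((t)_n\,n!))_{n\ge 0}$ at once (which is what powers their extension to the $m$-Narayana triangles in Theorem~\ref{thm-Narayana-triangle-m}); your Bessel argument would generalize to that family too, via $I_\nu$ for non-integer $\nu$, but less immediately.
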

\begin{proof}
We have
$$NA(n,k)=\frac{n!(n+1)!}{k!(k+1)!(n-k)!(n-k+1)!}\sim\frac{1}{(n-k)!(n-k+1)!}$$
and $$NB(n,k)=\frac{n!^2}{k!^2(n-k)!^2}\sim\frac{1}{(n-k)!^2}.$$
So, to show that the Narayana triangles $N_A$ and $N_B$ are TP,
it suffices to show that the sequences $(1/(n!(n+1)!))_{n\geq 0}$ and $(1/(n!^2))_{n\geq 0}$ are PF.
Based a classic result of Laguerre on multiplier sequences, Chen, Ren and Yang \cite[Proof of Conjecture 1.1]{CRY16} already proved that the sequence $(1/((t)_nn!))_{n\geq 0}$ is PF for any $t>0$, where
$(t)_n=t(t+1)\cdots(t+n-1)$. Letting $t=2$ (resp. $t=1$), we obtain the PF property of $(1/(n!(n+1)!))_{n\geq 0}$ (resp. $(1/(n!^2))_{n\geq 0}$), as desired.
\end{proof}

The method used here applies equally well to the triangle composed of $m$-Narayana numbers, which we will recall below.
Fix an integer $m\geq 0$. For any $n\geq m$ and $0\leq k\leq n-m$, the $m$-Narayana number $NA_{\langle m\rangle}(n,k)$
is given by
\begin{align}\label{eq-mnarayana}
NA_{\langle m\rangle}(n,k)=\frac{m+1}{n+2}\binom{n+2}{k+1}\binom{n-m}{k}.
\end{align}
When $m=0$ we get the usual Narayana numbers $NA(n,k)$.
For more information on the numbers $NA_{\langle m\rangle}(n,k)$,
see \cite{oeis}. It is easy to show that the Narayana triangle $N_A$ is symmetric: $NA(n,k)=NA(n,n-k)$, but
$$N_{A,{\langle m\rangle}}=\left[NA_{\langle m\rangle}(n,k)\right]_{n\geq m, 0\leq k\leq n-m}$$
and
$$\overleftarrow{N}_{A,{\langle m\rangle}}=\left[NA_{\langle m\rangle}(n,n-m-k)\right]_{n\geq m, 0\leq k\leq n-m}$$
are two different triangles for $m\geq 1$.
The proof of Theorem \ref{thm-Narayana-triangle} carries over directly to the following more general result.

\begin{thm}\label{thm-Narayana-triangle-m}
For any $m\geq 0$, both $N_{A,{\langle m\rangle}}$ and $\overleftarrow{N}_{A,{\langle m\rangle}}$ are TP.
\end{thm}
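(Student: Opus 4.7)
The plan is to imitate the proof of Theorem~\ref{thm-Narayana-triangle}: rescale each matrix by positive row and column factors so that, via Proposition~\ref{prop-eq}, its total positivity reduces to the PF property of a sequence already covered by the Chen--Ren--Yang result that $(1/((t)_n n!))_{n\geq 0}$ is PF for every $t>0$.

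Expanding the binomial coefficients in \eqref{eq-mnarayana},
$$NA_{\langle m\rangle}(n,k)=\frac{(m+1)(n+1)!(n-m)!}{(k+1)!\,k!\,(n-k+1)!\,(n-m-k)!}.$$
Pulling out the row factor $(m+1)(n+1)!(n-m)!$ (depending on $n$ alone) and the column factor $1/[(k+1)!\,k!]$ (depending on $k$ alone), Proposition~\ref{prop-eq} gives
$$NA_{\langle m\rangle}(n,k)\sim\frac{1}{(n-k+1)!\,(n-m-k)!},$$
which depends on $n-k$ alone. After extending by $0$ outside the trapezoid $0\leq k\leq n-m$, this is precisely the Toeplitz matrix $[c_{n-k}]$ with $c_j=\frac{1}{(j+1)!(j-m)!}$ for $j\geq m$ and $c_j=0$ for $0\leq j<m$. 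Its first $m$ rows vanish, so its total positivity is equivalent to that of the Toeplitz matrix of the left-shifted sequence $(c_{j+m})_{j\geq 0}=\bigl(\frac{1}{(j+m+1)!\,j!}\bigr)_{j\geq 0}=\frac{1}{(m+1)!}\bigl(\frac{1}{(m+2)_j\,j!}\bigr)_{j\geq 0}$, which is PF by Chen--Ren--Yang \cite[Proof of Conjecture 1.1]{CRY16} with $t=m+2$.

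For $\overleftarrow{N}_{A,\langle m\rangle}$ the substitution $k\mapsto n-m-k$ in the formula above produces the denominator $(n-m-k+1)!\,(n-m-k)!\,(m+k+1)!\,k!$; extracting the same row factor and the column factor $1/[(m+k+1)!\,k!]$ yields
$$NA_{\langle m\rangle}(n,n-m-k)\sim\frac{1}{(n-m-k+1)!\,(n-m-k)!},$$
again a function of $n-k$. The same zero-padding reduces the problem to the PF property of $\bigl(\frac{1}{(j+1)!\,j!}\bigr)_{j\geq 0}$, which is the $t=2$ case of Chen--Ren--Yang already used in the proof of Theorem~\ref{thm-Narayana-triangle}.

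The main obstacle is purely the shape mismatch: both $N_{A,\langle m\rangle}$ and $\overleftarrow{N}_{A,\langle m\rangle}$ are trapezoidal (shifted by $m$), so the associated sequences are supported on $\{j\geq m\}$ and their Toeplitz matrices begin with $m$ zero rows. One has to verify that this padding does not interfere with total positivity, which is immediate since any minor touching a zero row vanishes. Once this bookkeeping is cleared, both halves of the theorem are direct corollaries of the same Chen--Ren--Yang theorem that underlies Theorem~\ref{thm-Narayana-triangle}.
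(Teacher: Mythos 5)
Your proposal is correct and follows exactly the route the paper intends: the paper gives no details for this theorem, stating only that the proof of Theorem~\ref{thm-Narayana-triangle} ``carries over directly,'' and your argument is precisely that carry-over, with the rescaling $NA_{\langle m\rangle}(n,k)\sim 1/[(n-k+1)!(n-m-k)!]$ and the Chen--Ren--Yang result applied with $t=m+2$ (resp.\ $t=2$ for the reversed triangle). The bookkeeping about the $m$ leading zero rows of the padded Toeplitz matrix is handled correctly, so nothing is missing.
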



\section{The Narayana squares}

The object of this section is to prove the total positivity of the Narayana squares $N_A^\c$ and $N_B^\c$.
Our proof is based on the theory of Stieltjes moment sequences.

Given an infinite sequence $(a_n)_{n\ge 0}$ of real numbers, define its Hankel matrix as
$$[a_{n+k}]_{n,k\ge 0}=
\left[\begin{array}{ccccc}
a_0 & a_1 & a_2 & a_3 & \cdots\\
a_1 & a_2 & a_3 & a_4 & \\
a_2 & a_3 & a_4 & a_5 & \\
a_3 & a_4 & a_5 & a_6 & \\
\vdots & & & & \ddots \\
\end{array}\right].$$
We say that $(a_n)_{n\ge 0}$ is a {\it Stieltjes moment} (SM) sequence if it has the form
\begin{equation*}\label{i-e}
a_n=\int_0^{+\infty}x^nd\mu(x),
\end{equation*}
where $\mu$ is a non-negative measure on $[0,+\infty)$.
The following is a classic characterization for Stieltjes moment sequences
(see \cite[Theorem 4.4]{Pin10} for instance).

\begin{lem}\label{PSz}
A sequence $(a_n)_{n\ge 0}$ is SM if and only if 
\begin{enumerate}[\rm (i)]
\item the Hankel matrix $[a_{i+j}]$ is STP; or
\item both $[a_{i+j}]_{0\le i,j\le n}$ and $[a_{i+j+1}]_{0\le i,j\le n}$ are positive definite.
\end{enumerate}
\end{lem}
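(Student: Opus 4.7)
The plan is to establish the three-way equivalence via the cycle SM $\Rightarrow$ (i) $\Rightarrow$ (ii) $\Rightarrow$ SM (implicitly assuming the measure defining SM has infinite support, for otherwise (i) is unattainable). For SM $\Rightarrow$ (i), I would invoke the generalized Heine--Andr\'eief identity: for increasing row indices $r_1<\cdots<r_k$ and column indices $c_1<\cdots<c_k$,
\[
\det[a_{r_i+c_j}]=\frac{1}{k!}\int_{[0,\infty)^k}\det[x_\ell^{r_i}]\cdot\det[x_\ell^{c_j}]\,d\mu(x_1)\cdots d\mu(x_k).
\]
The two generalized Vandermonde determinants pick up matching signs under any permutation of the $x_\ell$, so their product is nonnegative on $[0,\infty)^k$ and strictly positive on the open cone $\{x_1<\cdots<x_k\}$. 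Since $\mu$ has infinite support, this cone has positive product measure, so every minor is strictly positive and $[a_{i+j}]$ is STP.

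For (i) $\Rightarrow$ (ii), STP forces every leading principal minor of the symmetric matrix $[a_{i+j}]_{0\leq i,j\leq n}$ to be positive, so Sylvester's criterion gives positive definiteness. The shifted matrix $[a_{i+j+1}]_{0\leq i,j\leq n}$ is also symmetric, and each of its leading principal minors is a non-principal minor of the ambient Hankel array with row set $\{0,\ldots,r\}$ and column set $\{1,\ldots,r+1\}$, hence still positive by STP; Sylvester once more yields positive definiteness.

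For (ii) $\Rightarrow$ SM, I would define the linear functional $L\colon\mathbb{R}[x]\to\mathbb{R}$ by $L(x^n)=a_n$. The two positive-definiteness conditions translate exactly to $L(P^2)>0$ and $L(xP^2)>0$ for every nonzero polynomial $P$. Invoking the classical decomposition that any polynomial nonnegative on $[0,\infty)$ can be written in the form $P(x)^2+xQ(x)^2$, one deduces that $L$ is strictly positive on every polynomial strictly positive on $[0,\infty)$. A Riesz--Haviland-type extension argument then produces a nonnegative measure $\mu$ on $[0,\infty)$ representing $L$, giving the desired integral formula. This last implication is also the main obstacle: one needs both the half-line sum-of-squares representation and a careful passage from a polynomial-level positive functional to an actual measure, typically via a Hahn--Banach extension combined with a weak-$*$ compactness argument on truncated moment problems. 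This is precisely where the shifted Hankel matrix is indispensable and where the Stieltjes problem genuinely diverges from the Hamburger problem on $\mathbb{R}$, in which only the single matrix $[a_{i+j}]$ need be positive definite.
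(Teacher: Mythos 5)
The paper does not prove this lemma at all: it is quoted verbatim as a classical characterization of Stieltjes moment sequences with a citation to Pinkus \cite[Theorem 4.4]{Pin10}, so there is no in-paper argument to compare against. Your reconstruction is the standard proof from the literature and its overall architecture (SM $\Rightarrow$ (i) via the Andr\'eief/Heine integral identity for Hankel minors, (i) $\Rightarrow$ (ii) via Sylvester's criterion applied to the two families of minors, (ii) $\Rightarrow$ SM via positivity of the functional $L$ on the preordering generated by $1$ and $x$ plus a Riesz--Haviland extension) is sound. Two points deserve care. First, you are right that the lemma as literally stated is false for measures of finite support, and your parenthetical caveat is exactly the nondegeneracy hypothesis under which Pinkus states the result; in the paper's applications the representing measures ($e^{-x}\,dx$ and its relatives) have infinite support, so nothing is lost. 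Second, the half-line Positivstellensatz you invoke is slightly misstated: a polynomial nonnegative on $[0,\infty)$ is of the form $\sigma_0(x)+x\,\sigma_1(x)$ where $\sigma_0,\sigma_1$ are \emph{sums of} squares (P\'olya--Szeg\H{o}; two squares each suffice in the univariate case), not single squares $P^2+xQ^2$. This does not damage the argument, since $L\bigl(\sum_i P_i^2 + x\sum_j Q_j^2\bigr)=\sum_i L(P_i^2)+\sum_j L(xQ_j^2)\ge 0$ by linearity, but as written the decomposition claim is false (e.g.\ $x^2+1$ is not a single square). With that correction, your sketch is a faithful account of the proof the paper outsources to \cite{Pin10}.
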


Many well-known counting coefficients are Stieltjes moment sequences, see \cite{LMW-DM}.
For example, the sequence $(n!)_{n\ge 0}$ is a Stieltjes moment sequence since
$$n!=\int_0^{+\infty}x^ne^{-x}dx=\int_0^{+\infty}x^nd\left(1-e^{-x}\right).$$
Thus the corresponding Hankel matrix $[(n+k)!]$ is STP.
Note that
$$\binom{n+k}{k}=\frac{(n+k)!}{n!k!}\sim (n+k)!.$$
Hence the Pascal square
$P^\c$
is also STP. The main result of this section is as follows.

\begin{thm}\label{thm-Narayana-square}
The Narayana squares
$N_A^\c$ and $N_B^\c$
are STP.
\end{thm}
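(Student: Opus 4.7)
The plan is to mirror the Pascal-square argument given just before the theorem: reduce the STP claim to a Stieltjes moment statement via Proposition~\ref{prop-eq} and Lemma~\ref{PSz}.

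First I would normalize both matrices to pure Hankel form. A direct computation yields
\begin{align*}
NA(n+k,k) &=\frac{(n+k+1)!\,(n+k)!}{(n+1)!\,n!\,(k+1)!\,k!}\sim (n+k)!\,(n+k+1)!,\\
NB(n+k,k) &=\frac{((n+k)!)^2}{(n!)^2\,(k!)^2}\sim ((n+k)!)^2.
\end{align*}
By Proposition~\ref{prop-eq}, STP of $N_A^\c$ (respectively $N_B^\c$) is equivalent to STP of the Hankel matrix of the sequence $(m!(m+1)!)_{m\ge 0}$ (respectively $((m!)^2)_{m\ge 0}$). Lemma~\ref{PSz}(i) then reduces the problem to showing that both of these sequences are Stieltjes moment sequences.

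The key step is the SM verification. I would invoke the standard closure property: if $a_n=\int_0^{+\infty}x^n\,d\mu(x)$ and $b_n=\int_0^{+\infty}y^n\,d\nu(y)$ are SM sequences, then so is $(a_n b_n)_{n\ge 0}$, since $a_n b_n=\int_0^{+\infty}t^n\,d\rho(t)$ where $\rho$ is the push-forward of $\mu\otimes\nu$ along the multiplication map $(x,y)\mapsto xy$ from $[0,+\infty)^2$ to $[0,+\infty)$. The excerpt already records $(n!)_{n\ge 0}$ as SM via $d\mu(x)=e^{-x}\,dx$; replacing the density by $xe^{-x}\,dx$ gives the SM representation of $((n+1)!)_{n\ge 0}$. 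Two applications of the closure property then produce SM representations of $((n!)^2)_{n\ge 0}$ and of $(n!(n+1)!)_{n\ge 0}$, which is precisely what is needed.

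The main (and rather minor) obstacle is recording the closure lemma for products, since it is not among the tools already cited in the excerpt. An alternative that bypasses it is to exhibit explicit Bessel-function densities: from the Mellin transform $\int_0^{\infty}t^{s-1}K_\nu(t)\,dt=2^{s-2}\Gamma(\tfrac{s-\nu}{2})\Gamma(\tfrac{s+\nu}{2})$, the substitution $t=2\sqrt{x}$ together with $\nu=0$ and $\nu=1$ produces the positive densities $2K_0(2\sqrt{x})$ and $2\sqrt{x}\,K_1(2\sqrt{x})$ on $(0,+\infty)$, whose moment sequences are $((n!)^2)_{n\ge 0}$ and $(n!(n+1)!)_{n\ge 0}$ respectively; positivity of the densities comes from $K_\nu>0$ on $(0,+\infty)$. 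Either route concludes the proof through Lemma~\ref{PSz} and Proposition~\ref{prop-eq}.
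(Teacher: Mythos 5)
Your proposal is correct and follows the same overall route as the paper: the same normalizations $NA(n+k,k)\sim (n+k)!\,(n+k+1)!$ and $NB(n+k,k)\sim ((n+k)!)^2$, the same reduction via Proposition~\ref{prop-eq} and Lemma~\ref{PSz}(i), and the same final target of showing that $(n!(n+1)!)_{n\ge 0}$ and $((n!)^2)_{n\ge 0}$ are Stieltjes moment sequences. The only genuine divergence is in how the two closure facts are justified. The paper obtains the SM property of $((n+1)!)_{n\ge 0}$ by the shift argument (a shifted SM sequence is SM because submatrices of an STP Hankel matrix are STP), and obtains closure of SM sequences under pointwise products from the Schur product theorem together with the positive-definiteness characterization in Lemma~\ref{PSz}(ii). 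You instead exhibit the measure $xe^{-x}\,dx$ directly for $((n+1)!)_{n\ge 0}$ and prove product closure by pushing forward $\mu\otimes\nu$ along $(x,y)\mapsto xy$; both of these are correct (Tonelli applies since the integrands are nonnegative) and arguably more self-contained, since they stay entirely within the integral-representation definition of SM rather than passing through the matrix characterizations. Your Bessel-function alternative also checks out: the Mellin transform of $K_\nu$ with $t=2\sqrt{x}$ does give $\int_0^\infty x^n\cdot 2K_0(2\sqrt{x})\,dx=(n!)^2$ and $\int_0^\infty x^n\cdot 2\sqrt{x}\,K_1(2\sqrt{x})\,dx=n!\,(n+1)!$, with positivity of the densities immediate from $K_\nu>0$. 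Either of your routes would serve as a complete replacement for the paper's appeal to the Schur product theorem.
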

\begin{proof}
We have
\begin{align*}
NA(n+k,k)&=\frac{(n+k)!(n+k+1)!}{k!(k+1)!n!(n+1)!}\sim (n+k)!(n+k+1)!
\end{align*}
and
\begin{align*}
NB(n+k,k)&=\frac{(n+k)!^2}{n!^2k!^2}\sim (n+k)!^2.
\end{align*}
So, to show that the Narayana squares $N_A^\c$ and $N_B^\c$ are STP,
it suffices to show that the sequences $(n!(n+1)!)_{n\ge 0}$ and $((n!)^2)_{n\ge 0}$ are SM.

Note that the submatrix of a STP matrix is still STP.
Hence if the sequence $(a_n)_{n\ge 0}$ is SM,
then so is its shifted sequence $(a_{n+1})_{n\ge 0}$
by Lemma \ref{PSz} (i).
Now the sequence $(n!)_{n\ge 0}$ is SM,
so is the sequence $((n+1)!)_{n\ge 0}$.
On the other hand,
the famous Schur product theorem states that
the Hadamard product $[a_{i,j}b_{i,j}]$ of two positive definite matrices $[a_{i,j}]$ and $[b_{i,j}]$ is still positive definite.
As a result, if both $(a_n)_{n\ge 0}$ and $(b_n)_{n\ge 0}$ are SM,
then so is $(a_nb_n)_{n\ge 0}$
by Lemma \ref{PSz} (ii).
We refer the reader to \cite[\S 4.10.4]{Pin10} for details.
Thus we conclude that both $(n!(n+1)!)_{n\ge 0}$ and $((n!)^2)_{n\ge 0}$ are SM,
as required.
\end{proof}

We can also consider the strict total positivity of the $m$-th Narayana square:
$$
N_{A,{\langle m\rangle}}^{\c}=\left[NA_{\langle m\rangle}(n+k,k)\right]_{n\geq m, k\geq 0},$$
where $NA_{\langle m\rangle}(n,k)$ is given by \eqref{eq-mnarayana}.
The following result can be proved in the same way as above.

\begin{thm}\label{thm-Narayana-square-m}
For any $m\geq 0$, the square $N_{A,{\langle m\rangle}}^{\c}$ is STP.
\end{thm}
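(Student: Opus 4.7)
The plan is to reproduce the scheme of the proof of Theorem~\ref{thm-Narayana-square}, reducing the STP assertion first to the Stieltjes-moment property of an explicit factorial sequence, and then obtaining that property from two standard closures: the shift closure and the Schur-product closure of SM sequences.

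First I would expand \eqref{eq-mnarayana} at $N=n+k$ to get
$$NA_{\langle m\rangle}(n+k,k)=\frac{m+1}{(k+1)!\,k!}\cdot\frac{1}{(n+1)!\,(n-m)!}\cdot (n+k+1)!\,(n+k-m)!$$
for $n\geq m$ and $k\geq 0$, noting that the first two prefactors are positive and depend only on $k$ and only on $n$ respectively. Proposition~\ref{prop-eq} then reduces the STP of $N_{A,\langle m\rangle}^{\c}$ to the STP of the matrix $\bigl[(n+k+1)!\,(n+k-m)!\bigr]_{n\geq m,\,k\geq 0}$, and the reindexing $n=n'+m$ turns this into a genuine Hankel matrix $[d_{n'+k}]_{n',k\geq 0}$ with $d_{\ell}=(\ell+m+1)!\,\ell!$. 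By Lemma~\ref{PSz}(i), it is therefore enough to show that $(d_\ell)_{\ell\geq 0}$ is a Stieltjes moment sequence.

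For this I would argue exactly as in the proof of Theorem~\ref{thm-Narayana-square}. Starting from the SM sequence $(\ell!)_{\ell\geq 0}$, applying the shift closure (any principal submatrix of an STP Hankel matrix is STP, so $(a_{\ell+1})$ is SM whenever $(a_\ell)$ is) a total of $m+1$ times yields the SM property of $((\ell+m+1)!)_{\ell\geq 0}$. Combining this with $(\ell!)_{\ell\geq 0}$ via the Schur-product closure -- which follows from Lemma~\ref{PSz}(ii) together with the Schur product theorem, as already invoked in Theorem~\ref{thm-Narayana-square} -- then produces the SM property of $d_\ell=(\ell+m+1)!\,\ell!$. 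I do not anticipate any real obstacle: the parameter $m$ enters only through the harmless extra shifts, and the only bookkeeping step that needs a moment of care is checking that the $\sim$-factorization above indeed separates cleanly into an $n$-factor, a $k$-factor and a function of $n+k$, so that Lemma~\ref{PSz}(i) is applicable after the reindexing.
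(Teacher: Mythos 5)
Your proposal is correct and is exactly the argument the paper intends: the paper states only that Theorem~\ref{thm-Narayana-square-m} ``can be proved in the same way as'' Theorem~\ref{thm-Narayana-square}, and your factorization of $NA_{\langle m\rangle}(n+k,k)$ into a $k$-factor, an $n$-factor, and the Hankel kernel $(\ell+m+1)!\,\ell!$, followed by the shift and Schur-product closures of SM sequences, is precisely that same method carried out explicitly.
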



\section{Remarks}


There are various generalizations of classical Narayana numbers, see for instance \cite{Armstrong09, Barry11, CYZ1601, CYZ1602, Petersen15}.
As we mentioned before, the numbers $NA(n,k)$ (resp. $NB(n,k)$) appear as the rank numbers of the poset of generalized noncrossing partitions associated to a Coxeter group of type $A$ (resp. $B$). These posets are further generalized by
Armstrong \cite{Armstrong09} by introducing the notion of $m$-divisible noncrossing partitions for any positive integer $m$
and any finite Coxeter group. Armstrong also showed that these generalized posets are not lattices but are still graded.

Fixing an integer $m\geq 1$, for $n\geq k\geq 0$ set
\begin{align*}
FNA_{\langle m\rangle}(n,k)&=\frac{1}{n+1}\binom{n+1}{k}\binom{m(n+1)}{n-k}\\
FNB_{\langle m\rangle}(n,k)&=\binom{n}{k}\binom{mn}{n-k}.
\end{align*}
These numbers are called the Fuss-Narayana numbers by Armstrong \cite{Armstrong09}, who proved that
$FNA_{\langle m\rangle}(n,k)$ (resp. $FNB_{\langle m\rangle}(n,k)$)
are the rank numbers of the poset of $m$-divisible noncrossing partitions associated to a Coxeter group of type $A$ (resp. $B$).

Note that,  for any $m\geq 2$, we have
$$FNA_{\langle m\rangle}(n,k)\neq FNA_{\langle m\rangle}(n,n-k), FNB_{\langle m\rangle}(n,k)\neq FNB_{\langle m\rangle}(n,n-k).$$
Now define the Fuss-Narayana triangles
\begin{align*}
FN_{A,{\langle m\rangle}}=\left[FNA_{\langle m\rangle}(n,k)\right]_{n,k\geq 0} ,\quad \overleftarrow{FN}_{A,{\langle m\rangle}}=\left[FNA_{\langle m\rangle}(n,n-k)\right]_{n,k\geq 0},\\
FN_{B,{\langle m\rangle}}=\left[FNB_{\langle m\rangle}(n,k)\right]_{n,k\geq 0} ,\quad \overleftarrow{FN}_{B,{\langle m\rangle}}=\left[FNB_{\langle m\rangle}(n,n-k)\right]_{n,k\geq 0}
\end{align*}
and the Fuss-Narayana squares
\begin{align*}
FN_{A,{\langle m\rangle}}^{\c}=\left[FNA_{\langle m\rangle}(n+k,k)\right]_{n,k\geq 0},\\
FN_{B,{\langle m\rangle}}^{\c}=\left[FNB_{\langle m\rangle}(n+k,k)\right]_{n,k\geq 0}.
\end{align*}
We proposed the following conjecture.
\begin{conj} For any $m\geq 1$, the Fuss-Narayana triangles are TP and the Fuss-Narayana squares are STP.
\end{conj}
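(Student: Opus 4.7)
We outline a possible approach. The plan is to imitate the two-step strategy of Sections~2 and~3: for the triangles, reduce to the PF property of an auxiliary sequence via Proposition~\ref{prop-eq} and the Schoenberg--Edrei theorem; for the squares, reduce to the SM property via Lemma~\ref{PSz} together with the Hadamard-product and shift closures used in the proof of Theorem~\ref{thm-Narayana-square}. Expanding the factorials gives, for type~$A$,
\[
FNA_{\langle m\rangle}(n,k)=\frac{n!\,(m(n+1))!}{k!\,(n+1-k)!\,(n-k)!\,((m-1)(n+1)+k+1)!},
\]
and analogously for type~$B$, so one would like to absorb the purely $n$- and purely $k$-dependent factorials into row and column multipliers, leaving a Toeplitz remainder. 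The first three factorials fit this pattern, but the factor $((m-1)(n+1)+k+1)!$ couples $n$ and $k$ linearly and cannot be written as $\alpha_n\beta_k\gamma_{n-k}$ when $m\ge 2$; a direct check of $\binom{m(n+k+1)}{n}$ at small $n+k$ shows the same obstruction in the square case. Thus the direct imitation of Theorems~\ref{thm-Narayana-triangle} and~\ref{thm-Narayana-square} breaks down for $m\ge 2$, and a different mechanism is needed.

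A promising alternative is to exhibit an explicit TP-preserving product decomposition $FN_{A,\langle m\rangle}=M^{(1)}M^{(2)}\cdots M^{(r)}$ in which each $M^{(i)}$ is of the form treated in Section~2 (a diagonal conjugate of the Toeplitz matrix of a PF sequence), and then apply the Cauchy--Binet formula. A natural source for such an identity is a Vandermonde-type convolution for the Fuss--Narayana polynomials obtained via Lagrange inversion from the Fuss--Catalan functional equation $C_m(x)=1+xC_m(x)^{m+1}$. Equivalently, one may use the interpretations of $FNA_{\langle m\rangle}(n,k)$ and $FNB_{\langle m\rangle}(n,k)$ as counting Fuss--Dyck paths, or noncrossing $m$-divisible partitions with $k$ distinguished features, and arrange sources and sinks in a Lindstr\"om--Gessel--Viennot configuration so that every admissible tuple of paths is automatically non-intersecting; the minors of the Fuss--Narayana matrices would then literally count such tuples and would therefore be nonnegative.

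The main obstacle will be the triangle case: any replacement mechanism must be uniform in $m$ and in type, and for type~$A$ it must accommodate the Raney factor $\tfrac{1}{n+1}$, which typically forces any lattice-path model to invoke a cycle-lemma-style quotient. Once the triangles are handled, the Fuss--Narayana squares should follow by the Stieltjes-moment route of Theorem~\ref{thm-Narayana-square}: both $(n!)_{n\ge 0}$ and $((mn)!)_{n\ge 0}$ are SM (the latter via the substitution $x\mapsto x^{1/m}$ in the gamma integral), so the relevant product sequences are SM by Hadamard closure, and the STP of the squares then reduces to the TP of the triangles together with Cauchy--Binet applied to the same factorization.
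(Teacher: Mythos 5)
The statement you are asked to prove is left as an open conjecture in the paper: the authors give no proof of it, so there is nothing to compare your argument against except the methods of Sections~2 and~3, which is exactly what you attempt. Your proposal, however, is not a proof either --- it is a (largely accurate) diagnosis of why those methods fail for $m\ge 2$, followed by two speculative research directions. The diagnosis is sound: your expansion of $FNA_{\langle m\rangle}(n,k)$ is correct, and the factor $\bigl((m-1)(n+1)+k+1\bigr)!$ (which for $m=2$ is $(n+k+2)!$, a Hankel-type rather than Toeplitz-type factor) indeed cannot be absorbed as $\alpha_n\beta_k\gamma_{n-k}$, so Proposition~\ref{prop-eq} plus the Schoenberg--Edrei theorem does not apply for $m\ge 2$. (For $m=1$ one checks that $FNA_{\langle 1\rangle}=NA$ and $FNB_{\langle 1\rangle}=NB$, so that case is already Theorems~\ref{thm-Narayana-triangle} and~\ref{thm-Narayana-square}; it would be worth saying this explicitly, since it is the only part of the conjecture your proposal actually settles.)

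The genuine gaps are in the two alternatives you offer. First, the proposed factorization $FN_{A,\langle m\rangle}=M^{(1)}\cdots M^{(r)}$ into diagonal conjugates of PF Toeplitz matrices is never exhibited; without an explicit convolution identity this is only a hope, and Lagrange inversion applied to $C_m(x)=1+xC_m(x)^{m+1}$ does not obviously produce one in which every factor is separately TP. Second, the Lindstr\"om--Gessel--Viennot route requires a concrete choice of sources, sinks and step sets for which the path count is exactly $FNA_{\langle m\rangle}(n,k)$ \emph{and} for which crossing path families cancel or are absent; you correctly flag the Raney factor $\tfrac{1}{n+1}$ as the obstruction but do not overcome it. Finally, your last paragraph is internally inconsistent: you observe earlier that the $n$--$k$ coupling persists in $\binom{m(n+k+1)}{n}$, which means that for $m\ge 2$ the square $FN_{A,\langle m\rangle}^{\c}$ is \emph{not} $\sim$-equivalent to the Hankel matrix of any single sequence, so Lemma~\ref{PSz} and the Hadamard/shift closure argument of Theorem~\ref{thm-Narayana-square} cannot be invoked as stated; the SM property of $((mn)!)_{n\ge 0}$, while true, does not by itself yield the strict total positivity of the square. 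As it stands the conjecture remains open for all $m\ge 2$.
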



There are other symmetric combinatorial triangles,
which are TP and the corresponding squares are STP.
The Delannoy number $D(n,k)$ is the number of lattice paths from $(0,0)$ to $(n,k)$
using steps $(1,0), (0,1)$ and $(1,1)$.
Clearly,
$$D(n,k)=D(n-1,k)+D(n-1,k-1)+D(n,k-1),$$
with $D(0,k)=D(k,0)=1$.
It is well known that the Narayana number $NA(n,k)$
counts the number of Dyck paths (using steps $(1,1)$ and $(1,-1)$) from $(0, 0)$ to $(2n, 0)$ with $k$ peaks.
It is also known that $NA_{\langle m\rangle}(n,k)$ counts the number of Dyck paths of semilength $n$ whose last $m$ steps are $(1,-1)$ with $k$ peaks,
see Callan's note in \cite{oeis}.
Brenti \cite[Corollar 5.15]{Bre95} showed that the Delannoy triangle
$D=\left[D(n-k,k)\right]_{n\geq k\geq 0}$ and and the Delannoy square $D^\c=\left[D(n,k)\right]_{n, k\geq 0}$
are TP by means of lattice path techniques.
The following problem naturally arises.

\begin{ques}
Whether the total positivity of Narayana matrices
can also be obtained by a similar combinatorial approach?
\end{ques}

We have seen that the Pascal square has the decomposition $P^\c=PP^T$.
We also have $D^\c=P\diag(1,2,2^2,\ldots)P^T$
since
$$D(n,k)=\sum_{j}2^j\binom{k}{j}\binom{n}{j}$$
(see \cite{BS05} for instance).
A natural problem is to find out the explicit (modified) Choleski decomposition of the Narayana squares $N_A^\c$ and $N_B^\c$.

%
%


Another well-known symmetric triangle is
the Eulerian triangle $A=[A(n,k)]_{n,k\ge 1}$
where $A(n,k)$ is the Eulerian number,
which counts the number of $n$-permutations with exactly $k-1$ excedances.
Brenti \cite[Conjecture 6.10]{Bre96} conjectured that the Eulerian triangle $A$ is TP.
Motivated by the strict total positivity of the Narayana squares, we posed the following conjecture.

\begin{conj}
The Eulerian square $A^\c=[A(n+k,k)]$ is STP.
\end{conj}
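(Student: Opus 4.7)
The plan is to follow the blueprint of Theorem~\ref{thm-Narayana-square}, applying Lemma~\ref{PSz}(i) as the core tool. My first move is to ask whether there exist positive sequences $(\alpha_n)_{n\ge 0}$ and $(\beta_k)_{k\ge 1}$ and a sequence $(a_n)_{n\ge 1}$ with $A(n+k,k)=\alpha_n\beta_k\,a_{n+k}$; if such a scaling exists, Proposition~\ref{prop-eq} reduces the problem to showing $(a_n)$ is a Stieltjes moment sequence, and Lemma~\ref{PSz}(i) closes the argument. A short direct computation on the top-left $4\times 4$ block of $A^\c$ rules this out: the normalization forced by matching the $3\times 3$ principal block already gives inconsistent values of $a_5$ read off from the $(1,4)$ and $(2,3)$ entries. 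The Eulerian numbers refuse the clean row-column factorization that trivialized the Narayana square, so something beyond Section~3 is needed.

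The natural replacement is the Jacobi continued-fraction theory of moments. It is classical that the ordinary generating function $\sum_{n\ge 0}A_n(x)\,t^n$ of the Eulerian polynomials $A_n(x)=\sum_k A(n,k)x^{k-1}$ admits a $J$-fraction whose coefficients are nonnegative polynomials in $x$; via the Flajolet--Viennot correspondence, this shows that for every fixed $x>0$ the sequence $(A_n(x))_{n\ge 0}$ is a Stieltjes moment sequence, and hence the Hankel matrix $[A_{n+k}(x)]_{n,k\ge 0}$ is STP for every $x>0$. The remaining task is to descend from strict total positivity of this matrix of polynomials to strict total positivity of its column-coefficient matrix $A^\c=[A(n+k,k)]$.

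The most promising vehicle for this descent is the bivariate continued-fraction framework developed by Sokal and collaborators, in which a Stieltjes- or Thron-type fraction for $\sum_n A_n(x)\,t^n$ with nonnegative coefficient data produces \emph{coefficientwise} strict total positivity of the matrices whose entries are individual Taylor coefficients. Concretely, I would identify a bivariate $J$-fraction for the Eulerian ordinary generating function (for which work of Carlitz, Flajolet, and Zeng provides ample starting material) and then verify that its coefficient polynomials satisfy the nonnegativity hypotheses that the Sokal framework requires in order to output the STP of $[A(n+k,k)]$. In parallel, one could attempt a Lindstr\"om--Gessel--Viennot model: a family of weighted nonintersecting lattice paths whose counts are precisely $A(n+k,k)$, from which STP would follow by the usual LGV determinant argument.

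The main obstacle, and the reason the argument does not collapse to the one-line proof of Theorem~\ref{thm-Narayana-square}, is that the coefficient functional $[x^{k-1}]$ on polynomials in $x$ is \emph{signed} rather than positive. Consequently the univariate Stieltjes-moment structure of $(A_n(x))_n$ for fixed $x>0$ does not transfer to coefficientwise positivity by any integration against a nonnegative measure on $(0,\infty)$; a genuinely bivariate positivity mechanism, or a direct combinatorial LGV construction, appears to be the only viable way forward. This is the same obstacle that keeps Brenti's conjecture on the Eulerian triangle open, and I expect the square to be no easier.
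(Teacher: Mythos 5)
First, a point of orientation: the statement you were given is posed in the paper as an open conjecture, and the authors supply no proof of it, so there is no argument of theirs to measure yours against. Read on its own terms, your proposal is an accurate diagnosis of why the problem is hard rather than a proof. The one step you actually carry out --- showing that no factorization $A(n+k,k)=\alpha_n\beta_k\,a_{n+k}$ with positive $\alpha_n,\beta_k$ can exist --- is correct and worth recording precisely: normalizing $\alpha_1=\beta_1=1$, the first column forces $\alpha_n=1/a_{n+1}$ and the first row forces $\beta_k=A(k+1,2)/a_{k+1}$, and then the two entries on the antidiagonal $n+k=5$, namely $A(5,3)=66$ and $A(5,2)=26$, would require $a_5/(a_3a_4)$ to equal both $6$ and $13/2$. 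This genuinely closes off the route of Theorem~\ref{thm-Narayana-square}, where the Narayana squares were reduced via Proposition~\ref{prop-eq} to the Stieltjes moment property of a single sequence and dispatched by Lemma~\ref{PSz}.

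Everything after that, however, is a program, not an argument, and the gap is exactly the conjecture itself. The strict total positivity of the Hankel matrices $[A_{n+k}(x)]$ for each fixed $x>0$ does not descend to the coefficient matrix $[A(n+k,k)]$, for the reason you yourself identify: extraction of the coefficient of $x^{k-1}$ is a signed functional, so no integration against a nonnegative measure converts the univariate moment structure into coefficientwise positivity. The two replacement mechanisms you invoke --- a bivariate continued fraction with coefficientwise-nonnegative data in the sense of Sokal, or a Lindstr\"om--Gessel--Viennot family of nonintersecting paths counted by $A(n+k,k)$ --- are named but never constructed, and their hypotheses are never verified; indeed your closing sentence concedes that the statement appears to remain open. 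That is the honest conclusion, but it means the proposal contains no proof. If you want to salvage something rigorous from it, the checkable piece is the failure of the $\sim$-factorization above; the rest should be presented as open directions rather than as steps of an argument.
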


\section*{Acknowledgements}

This work was supported by the National Science Foundation of China (Nos. 11231004, 11371078, 11522110).


\end{document}